\newcommand{\N}{\ensuremath{\mathbb{N}}}
\newcommand{\Z}{\ensuremath{\mathbb{Z}}}
\newcommand{\R}{\ensuremath{\mathbb{R}}}
\newcommand{\Y}{\ensuremath{\mathbb{Y}}}
\renewcommand{\S}{\ensuremath{\mathbb{S}}}
\renewcommand{\rho}{\varrho}
\renewcommand{\epsilon}{\varepsilon}
\DeclareMathOperator{\im}{im}
\newtheorem{thm}{Theorem}%[section] %%%%% Feel free to put/remove [section] here
\newtheorem{lem}[thm]{Lemma}
\newtheorem{defn}[thm]{Definition}
\newtheorem{prop}[thm]{Proposition}
\newcommand{\cT}[7]{
\draw (#1,#2) rectangle +(1,1);
\node at (#1+0.5, #2+0.5)
{${\scriptstyle #3}\negthinspace\overset{#4}{\underset{#6}{#7}}\negthinspace{\scriptstyle
#5}$};
}
\begin{document}

%%%%% Should be less than 15 words
\title{
\textbf{Quotient cohomology of certain 1- and 2-dimensional \\ substitution tiling spaces}
}

\author{
Enrico Paolo Bugarin and Franz G\"ahler
}

\date{
{\small Fakult\"at f\"ur Mathematik, Universit\"at Bielefeld \\ 
Universit\"atsstra{\ss}e 25, D-33615 Bielefeld, Germany}
}

\twocolumn[
\begin{@twocolumnfalse}

\maketitle

\begin{abstract} %%%%% Should not exceed 200 words

\noindent The quotient cohomology of tiling spaces is a topological invariant that relates a tiling space to one of
its factors, viewed as topological dynamical systems. In particular, it is a relative version of the tiling
cohomology that distinguishes factors of tiling spaces. In this work, the quotient cohomologies within certain
families of substitution tiling spaces in 1 and 2 dimensions are determined. Specifically, the quotient cohomologies
for the family of the generalised Thue-Morse sequences and generalised chair tilings are presented.
     
\smallskip

%%%% http://www.aip.org/pacs/pacs2010/individuals/pacs2010_regular_edition/alpha_index.html
\noindent PACS: 02.40.Re, 45.30.+s %%%%% Should be full and actual.

\bigskip

\end{abstract}

\end{@twocolumnfalse}
]

{
  \renewcommand{\thefootnote} %%%% email
    {\fnsymbol{footnote}}
  \footnotetext[1]{email: \texttt{\{pbugarin, gaehler\}@math.uni-bielefeld.de}}
}

%%%%%%%%%%%%%%%%%%%%%%%%%%%%%%%%%%%%%%%%%%%%%%%%%%%%%%%%%%%%%%%%%%%%%%%%%%%%%%%%%%%%%%%%%%%%%%%%%%%%%%%%%%%%%%%%%%%%
%%%%%%%%%%%%%%%%%%%%%%%%%%%%%%%%%%%%%%%%%%%%%%%%%%%%%%%%%%%%%%%%%%%%%%%%%%%%%%%%%%%%%%%%%%%%%%%%%%%%%%%%%%%%%%%%%%%%
%%%%% Main body begins here

\section{Introduction} %%%%%%%%%%%%%%%%%%%%%%%%%%%%%%%%%%%%%%%%%%%%%%%%%%%%%%%%%%%%%%%%%%%%%%%%%%%%%%%%%%%%%%%%%%%%%

Characterising tiling spaces through topological invariance can provide a systematic way of distinguishing tiling
spaces. In particular, tiling spaces with non-isomorphic (\v{C}ech) cohomology groups are necessarily inequivalent.
The converse though is not true in general, as can be seen in the case of the classical Thue-Morse and
period-doubling sequences, which form two inequivalent tiling spaces with isomorphic cohomology groups. However,
for tiling spaces related by a factor map (regarding the spaces as topological dynamical systems), a relative 
version of the tiling cohomology can be used to tell the spaces apart. Barge and Sadun \cite{bs11} introduced the
concept of \emph{quotient cohomology}, which is a topological invariant that distinguishes factors of tiling spaces.
In this paper, we present the quotient cohomologies for the families of generalised Thue-Morse sequences and
generalised chair tilings. In recomputing the quotient cohomologies of the generalised chair tilings, we find
some discrepancies with \cite{bs11}, which we address below.

\subsection{Preliminaries} %%%%%%%%%%%%%%%%%%%%%%%%%%%%%%%%%%%%%%%%%%%%%%%%%%%%%%%%%%%%%%%%%%%%%%%%%%%%%%%%%%%%%%%%%

A primitive (tiling) \emph{substitution rule} is a recipe of constructing tilings of $\R^d$ using only a finite set
of tile types, called prototiles. The rule prescribes on how each prototile is scaled linearly (by a fixed inflation
factor) and then is subdivided into a collection of smaller tiles called a {supertile} (of order 1), all of 
which are translate copies of some prototiles. Applying the substitution rule to any (proto)tile $k$ times produces a
supertile of order $k$, and the limit of the process produces a tiling of $\R^d$, called a \emph{substitution tiling}.

A tiling $T$ of $\R^d$ arising from a substitution rule $\omega$ defines a \emph{substitution tiling space} as
its \emph{hull}. The hull of $T$, denoted by $\Omega_T$, is the closure of its translation orbit under a metric,
where two tilings are ``$\epsilon$-close'' if they agree on a ball of radius $\epsilon^{-1}$ around the origin, 
after a translation of at most $\epsilon$ in any direction. Two tilings $T$ and $T'$ arising from the same
substitution rule $\omega$ define the same hull, and so the tiling space is instead associated with
a substitution rule rather than a particular substitution tiling, i.e., $\Omega_\omega := \Omega_T = \Omega_{T'}$.

A substitution tiling space can be represented as an inverse limit of simpler spaces called approximants, relative 
to a continuous bonding map induced by the substitution rule $\omega$, see \cite{ap98, sa08} for more details. When
the prototiles in $\Omega_\omega$ are homeomorphic to a disk in $\R^d$, then the approximants are $d$-dimensional
CW complexes, which are also known as the Anderson-Putnam complexes. The cohomology of $\Omega_\omega$ is then
computed as the direct limit of the cohomologies of the approximants under the homomorphism induced by $\omega$.

Substitution tiling spaces are minimal dynamical systems, and factor maps between these spaces are surjective
and generally not injective. (And so the relative cohomology is not immediately available.) A factor map 
$f: \Omega_X \rightarrow \Omega_Y$ induces a quotient map (denoted by the same symbol) on the level of approximants
that is also surjective and generally not injective. This motivates the following definition of the quotient
cohomology \cite{bs11}.

\begin{defn}\label{def:quotcoh} 
Let $f: X \rightarrow Y$ be a quotient map such that $f^*$ is injective on cochains. Also, let 
$C^k_Q(X,Y) := C^k(X)/f^*(C^k(Y))$ be the quotient cochain groups and take 
$\delta_k : C^k_Q(X,Y) \rightarrow C_Q^{k+1}(X,Y)$ to be the usual coboundary operator. The ($k$th)
\emph{quotient cohomology} is defined as $H^k_Q(X,Y) := \ker \delta_k / \im\delta_{k-1}.$
\end{defn}

\noindent By the snake lemma, the short exact sequence of cochain complexes 
$$0 \longrightarrow C^k(Y) \xlongrightarrow{f^*} C^k(X) \longrightarrow C^k_Q(X, Y) \longrightarrow 0$$
induces a long exact sequence
\begin{equation}\label{eq:longseq}
\begin{gathered}
\cdots \longrightarrow H^{k-1}_Q(X,Y) \longrightarrow H^k(Y) \xlongrightarrow{f^*_k} \\ 
H^k(X) \longrightarrow H^k_Q(X,Y)\longrightarrow \cdots
\end{gathered} 
\end{equation}
that relates the cohomologies of $X$ and $Y$ to $H^*_Q(X,Y)$.

\begin{lem} \label{lem:quotcohom}
Let $f: X \rightarrow Y$ be a quotient map, whose pullback $f^*$ is injective on the cochains. If $H^{n+1}(Y) = 0$,
then $H^n_Q(X,Y) = H^n(X)/f^*_n(H^n(Y))$. For $X$ and $Y$ being approximant spaces for substitution tiling spaces,
$H^0_Q(X,Y) = 0$ if and only if $f^*_1: H^1(Y) \rightarrow H^1(X)$ is injective.
\end{lem}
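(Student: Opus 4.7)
The plan is to derive both statements directly from the long exact sequence \eqref{eq:longseq} by extracting the appropriate segments and invoking one standard fact about the approximant spaces, namely their connectedness.

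For the first assertion, I would look at the four-term segment
$$H^n(Y) \xlongrightarrow{f^*_n} H^n(X) \longrightarrow H^n_Q(X,Y) \longrightarrow H^{n+1}(Y).$$
Since $H^{n+1}(Y)=0$ by hypothesis, the map $H^n(X)\to H^n_Q(X,Y)$ is surjective, and by exactness its kernel equals $\im(f^*_n)=f^*_n(H^n(Y))$. The first isomorphism theorem then yields $H^n_Q(X,Y)\cong H^n(X)/f^*_n(H^n(Y))$. This part is a routine extraction of a short exact sequence from a long one and should present no difficulty.

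For the second assertion I would examine the initial segment
$$0 \longrightarrow H^0(Y) \xlongrightarrow{f^*_0} H^0(X) \longrightarrow H^0_Q(X,Y) \longrightarrow H^1(Y) \xlongrightarrow{f^*_1} H^1(X).$$
The key step is to observe that $f^*_0$ is an isomorphism. Because the substitution is primitive, the Anderson--Putnam complexes $X$ and $Y$ are connected CW complexes, so $H^0(Y)\cong H^0(X)\cong\Z$, generated by the constant $1$-cochain. Since $f$ is surjective, $f^*_0$ sends generator to generator and is therefore an isomorphism. Consequently the map $H^0(X)\to H^0_Q(X,Y)$ vanishes, and exactness at $H^0_Q(X,Y)$ identifies it with $\ker(f^*_1)$. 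The equivalence $H^0_Q(X,Y)=0\iff f^*_1$ injective is then immediate.

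The only delicate point in the argument is justifying that both approximant spaces are connected, which I expect to be the main (and essentially only) obstacle. This follows from the primitivity of the substitution rule: every prototile eventually appears in the supertile of every other prototile, which in the Anderson--Putnam complex translates into a single connected component. Once this standard fact is invoked, both conclusions drop out of the long exact sequence essentially by bookkeeping.
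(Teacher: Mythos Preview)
Your proposal is correct and follows essentially the same route as the paper: both arguments extract the relevant segments of the long exact sequence \eqref{eq:longseq}, and both hinge on showing that $f^*_0$ is an isomorphism (the paper does this by noting that $\sum_j f^*(y'_j)=\sum_i x'_i$ generates $H^0(X)=\Z$, which is exactly your ``constant $1$-cochain'' argument phrased in terms of dual $0$-cells). Your identification $H^0_Q(X,Y)\cong\ker(f^*_1)$ is in fact slightly more streamlined than the paper's separate treatment of the two implications.
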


\begin{proof}
If $X$ and $Y$ are approximant spaces for substitution tiling spaces, then 
$H^0(X) = \Z = \langle\sum x'_i\rangle = \langle\sum f^*(y'_j)\rangle =f^*_0(H^0(Y)) = f^*_0(\Z)$, where the 
$x'_i$'s and $y'_j$'s are the duals to the $0$-cells in $X$ and $Y$ respectively. Thus $f^*_0$ is surjective, and so
is an isomorphism. Further, the map from $H^0(X)$ to $H^0_Q(X,Y)$ in \eqref{eq:longseq} must be a zero map and so 
the map from $H^0_Q(X,Y)$ to $H^1(Y)$ must be injective. If $f^*_1$ is injective, then the map from $H^0_Q(X,Y)$ to
$H^1(Y)$ must be a zero map as well, which is already shown to be injective, forcing $H^0_Q(X,Y) = 0$. Conversely, 
if $H^0_Q(X,Y)=0$, then $f_1^*$ must be injective since the sequence in \eqref{eq:longseq} is exact. Meanwhile,
$H^{n+1}(Y)=0$ implies that the map $H^n(X)$ to $H^n_Q(X,Y)$ is surjective, and so it follows that 
$H^n_Q(X,Y) = H^n(X)/f^*_n(H^n(Y))$.
\end{proof}

All substitution tiling spaces considered in this work yield $H^0_Q = 0$.

\section{Generalised Thue-Morse sequences} %%%%%%%%%%%%%%%%%%%%%%%%%%%%%%%%%%%%%%%%%%%%%%%%%%%%%%%%%%%%%%%%%%%%%%%%%

For any $k, \ell \in \N$, the substitution rules
\begin{equation}\label{eq:TMpdsubsitutionrules}
\begin{array}{ll}
\varrho^{TM}_{k,\ell}&:=\left\{\begin{array}{rcl}    1  & \longmapsto & 1^k \, \bar{1}^\ell \\
                                                \bar{1} & \longmapsto & \bar{1}^k \, 1^\ell
                              \end{array}\right. 
\\ \\[-7pt]
\varrho^{pd}_{k,\ell}&:=\left\{\begin{array}{rcl}    a  & \longmapsto & b^{k-1} \, a \, b^{\ell-1} \, b \\ 
                                                     b  & \longmapsto & b^{k-1} \, a \, b^{\ell-1} \, a
                              \end{array}\right.
\end{array}
\end{equation}
define the hulls $\Y^{TM}_{k,\ell}$ (generalised Thue-Morse) and $\Y^{pd}_{k,\ell}$ (generalised period doubling)
respectively. The case $k=\ell=1$ yields the classic Thue-Morse and period doubling sequences. For a detailed
exposition on the spectral and topological properties of the generalised Thue-Morse sequences, we refer the readers 
to \cite{bgg12}.

Each letter in \eqref{eq:TMpdsubsitutionrules} becomes a tile in $\R$ by assigning the same constant length to any 
one of them, so that a letter becomes a closed interval in $\R$. In turn, every bi-infinite sequence arising from
\eqref{eq:TMpdsubsitutionrules} tiles $\R$ in an obvious way. We also consider the 1-dimensional solenoid
$\S_{k+\ell}$, which can be viewed as the inverse limit of 1-dimensional tori under the bonding maps that uniformly
wrap a torus $k+\ell$ times around its predecessor. The solenoid $\S_{k+\ell}$ may be realised as the inverse limit 
of the substitution $s \longmapsto s^{k+\ell}$, though strictly speaking, the solenoid is not a tiling space because
tilings generated by this substitution are all periodic. The solenoid has as additional information the partitioning
of these tilings into supertiles of all orders. For convenience, we may nevertheless use the term `tiling space' 
even for solenoids in the following.

The three spaces are related via the factor maps $\phi$ and $\psi$, namely
\begin{equation*}\label{eq:gtmfactormaps}
\Y^{TM}_{k,\ell} \xlongrightarrow{\phi} \Y^{pd}_{k,\ell} \xlongrightarrow{\psi} \S^{}_{k+\ell},
\end{equation*} 
where $\phi$ is a sliding block map that identifies $\{1\bar{1}, \bar{1}1\}$ with $a$, and $\{11, \bar{1}\bar{1}\}$
with $b$; while $\psi$ simply identifies $a$ and $b$ with $s$. Note that $\phi$ is uniformly 2-to-1, whereas $\psi$ 
is a surjection that is 1-to-1 almost everywhere. Each of these factor maps induces a pullback on their respective
cohomologies given by
\begin{equation*} 
H^*(\S^{}_{k+\ell}) \xlongrightarrow{\psi^*} H^*(\Y^{pd}_{k,\ell}) \xlongrightarrow{\phi^*} H^*(\Y^{TM}_{k,\ell}).
\end{equation*}
As computed in \cite{bgg12}, the cohomologies of the three tiling spaces are: $H^0 = \Z$ and
\begin{gather*}
H^1(\S^{}_{k+\ell}) = \Z[\tfrac{1}{k+\ell}], \quad H^1(\Y^{pd}_{k,\ell}) = \Z[\tfrac{1}{k+\ell}] \oplus \Z, \\
H^1(\Y^{TM}_{k,\ell}) = \Z[\tfrac{1}{k+\ell}] \oplus \Z \oplus \Z[\tfrac{1}{|k-\ell|}],
\end{gather*}
where $\Z[\tfrac{1}{0}] := 0$ by an abuse of notation.

\begin{thm} For any $k,\ell \in \N$, $H^0_Q = 0$. Further,
\begin{align*}
H^1_Q(\Y_{k,\ell}^{pd},\S^{}_{k+\ell}) &= \Z, \\
H^1_{Q}(\Y_{k,\ell}^{TM},\S^{}_{k+\ell}) &= \Z [\tfrac{1}{|k-\ell|}] \oplus \Z, \\
H^1_{Q}(\Y_{k,\ell}^{TM},\Y_{k,\ell}^{pd}) &= \Z_2 \oplus \Z [\tfrac{1}{|k-\ell|}],
\end{align*} 
where $\Z[\tfrac{1}{0}] := 0$ by an abuse of notation.
\end{thm}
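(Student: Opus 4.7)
The plan is to exploit the one-dimensionality of all three tiling spaces together with Lemma \ref{lem:quotcohom}. Since every space in the chain $\Y^{TM}_{k,\ell}\to\Y^{pd}_{k,\ell}\to\S_{k+\ell}$ has trivial cohomology in degree $\geq 2$, the lemma reduces each $H^1_Q(X,Y)$ to the cokernel $H^1(X)/f^*_1(H^1(Y))$, \emph{provided} that we first verify (i) injectivity of $f^*$ on cochains (so that the definition applies) and (ii) injectivity of $f^*_1$ (which, by the second half of Lemma \ref{lem:quotcohom}, simultaneously establishes $H^0_Q=0$). So the whole theorem boils down to identifying the three pullback maps on $H^1$ explicitly and reading off their cokernels.

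Next, I would set up the Anderson--Putnam approximants: a single circle $\S^1_s$ for $\S_{k+\ell}$, the 2-edge complex $K^{pd}$ with edges $a,b$ for $\Y^{pd}_{k,\ell}$, and the collared 2-edge complex $K^{TM}$ with edges $1,\bar 1$ for $\Y^{TM}_{k,\ell}$ (collared as needed to get force the border; for this family the uncollared complex actually already works because the substitution is proper). On these approximants the factor maps are cellular: $\psi$ collapses $a$ and $b$ both to $s$, while $\phi$ maps the edge $1\bar 1$ (resp.\ $\bar 1 1$) to $a$ and $11$ (resp.\ $\bar 1\bar 1$) to $b$. In cochains (with generators dual to the edges), $\psi^*(s^*)=a^*+b^*$, while $\phi^*(a^*)$ and $\phi^*(b^*)$ can be written in terms of $1^*$ and $\bar 1^*$ via the block map. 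Passing these formulas through the direct limit (where the substitution acts on $\Z^2$ by the abelianised substitution matrices $\bigl(\begin{smallmatrix}k&\ell\\ \ell&k\end{smallmatrix}\bigr)$ for $\Y^{TM}$ and a similar matrix for $\Y^{pd}$, whose eigenvalues $k+\ell$ and $k-\ell$ yield the localisations $\Z[1/(k+\ell)]$ and $\Z[1/|k-\ell|]$) recovers the cohomology groups listed before the theorem. In particular, the splittings there are canonical: the first $\Z[1/(k+\ell)]$ summand corresponds to the symmetric combination $1^*+\bar 1^*$ (resp.\ $a^*+b^*$), while the extra $\Z[1/|k-\ell|]$ in $H^1(\Y^{TM})$ corresponds to the antisymmetric combination $1^*-\bar 1^*$.

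With this accounting, the three computations are transparent. For $\psi$: the symmetric class $s^*$ pulls back to $a^*+b^*$, identifying $H^1(\S_{k+\ell})$ with the first $\Z[1/(k+\ell)]$ summand of $H^1(\Y^{pd}_{k,\ell})$; the cokernel is the remaining $\Z$, giving $H^1_Q(\Y^{pd},\S)=\Z$. For $\phi\circ\psi\colon\Y^{TM}\to\S$: the symmetric class again pulls back to $1^*+\bar 1^*$, killing only the first summand of $H^1(\Y^{TM})$ and leaving $\Z\oplus\Z[1/|k-\ell|]$. For $\phi\colon\Y^{TM}\to\Y^{pd}$: the $\Z[1/(k+\ell)]$ part again maps isomorphically onto the first summand of $H^1(\Y^{TM})$, but on the second $\Z$ summand one checks that the generator of $H^1(\Y^{pd})$ coming from $b^*-a^*$ (or equivalently, the distinction of $a$-positions modulo $2$) pulls back, under the 2-to-1 block map $\phi$, to \emph{twice} the corresponding generator in $H^1(\Y^{TM})$. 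This is exactly the source of the $\Z_2$ summand in $H^1_Q(\Y^{TM},\Y^{pd})$; combined with the free cokernel $\Z[1/|k-\ell|]$ coming from the antisymmetric eigenspace (which lies entirely outside the image of $\phi^*_1$), one obtains $\Z_2\oplus\Z[1/|k-\ell|]$. Injectivity of $f^*_1$ in each case is visible from the direct-limit description (the images are pure sub-direct-limits and no torsion is killed), which simultaneously yields $H^0_Q=0$.

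The main obstacle is the bookkeeping in the last step: one must pin down the correct generator of the free $\Z$ summand of $H^1(\Y^{pd})$ as an integral class in $H^1(\Y^{TM})$ and verify that its image is exactly $2\cdot(\text{generator})$, not merely divisible by $2$ in some weaker sense. This requires either a careful comparison of the substitution matrices acting on the antisymmetric/symmetric decompositions at the approximant level before passing to the direct limit, or an explicit coboundary calculation on a few low-order supertiles to fix the factor of $2$. Once this integer factor is nailed down, the remaining computations are routine cokernel calculations in localisations of $\Z$.
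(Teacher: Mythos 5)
Your proposal is correct and follows essentially the same route as the paper: reduce everything to cokernels of $f^*_1$ via Lemma~\ref{lem:quotcohom} (with injectivity of $f^*_1$ giving $H^0_Q=0$), using that $\psi^*$ and $\phi^*$ carry $\Z[\tfrac{1}{k+\ell}]$ isomorphically onto the corresponding summand while $\phi^*$ sends the $\Z$ summand onto $2\Z$. The only difference is that the paper simply cites \cite{bgg12} for these facts about the pullbacks, whereas you sketch their derivation from the Anderson--Putnam approximants (where, as a minor caveat, the Thue--Morse substitution is not proper, so the collaring you mention is genuinely needed).
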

\begin{proof}
The map $\psi^*$ embeds $H^1(\S_{k+\ell}) = \Z[\frac{1}{k+\ell}]$ isomorphically onto the same summand in
$H^1(\Y^{pd}_{k,\ell})$, which the map $\phi^*$ also embeds isomorphically onto the same summand in
$H^1(\Y^{TM}_{k,\ell})$. Thus, $\varphi^* := \phi^*\circ\psi^*$ also embeds 
$H^1(\S_{k+\ell}) = \Z[\frac{1}{k+\ell}]$ isomorphically onto $\Z[\frac{1}{k+\ell}]$ in $H^1(\Y^{TM}_{k,\ell})$.
Furthermore, $\phi^*$ maps the summand $\Z$ in $H^1(\Y^{pd}_{k,\ell})$ onto $2\Z$ in $H^1(\Y^{TM}_{k,\ell})$ (see
\cite{bgg12}). Thus, the maps $\psi^*_1$, $\phi^*_1$ and $\varphi^*_1$ are all injective maps, and so $H^0_Q = 0$ 
for all spaces using Lemma \ref{lem:quotcohom}. By the same lemma, we get 
$H^1_{Q}(\Y_{k,\ell}^{TM},\Y_{k,\ell}^{pd}) = H^1(\Y^{TM}_{k,\ell})/\phi^*_1(H^1(\Y_{k,\ell}^{pd})) 
= \Z_2 \oplus \Z [\tfrac{1}{|k-\ell|}]$. The rest of the results follow similarly.
\end{proof}%

The three non-trivial quotient cohomologies are related via the short exact sequence
\begin{gather*}
0 \longrightarrow H^1_Q(\Y^{pd}_{k,\ell}, \S^{}_{k+\ell}) \xlongrightarrow{\times 2} H^1_Q(\Y^{TM}_{k,\ell},
\S^{}_{k+\ell}) \longrightarrow \\ H^1_Q(\Y^{TM}_{k,\ell}, \Y^{pd}_{k,\ell}) \longrightarrow 0.
\end{gather*}

\section{Generalised chair tilings} %%%%%%%%%%%%%%%%%%%%%%%%%%%%%%%%%%%%%%%%%%%%%%%%%%%%%%%%%%%%%%%%%%%%%%%%%%%%%%%%

The hull of the classic chair tiling, c.f. \cite{rob99}, (also called triomino tiling \cite{ap98}) 
defined by the substitution rule

$$\begin{tikzpicture}[scale=1] %[every path/.style={>=latex}]
  \draw (0,0.5) -| (1,1) -| (0.5,1.5) -| (0,0.5);
  \draw [|->] (1.5,1) -- +(0.5, 0);
  \draw (2.5,0) -| (4.5,1) -| (3.5,2) -| (2.5,0);
  \draw (3.5,1.5) -| (3,0.5) -| (4,1);
  \draw (2.5,1) -| (3,0.5) -| (3.5,0);
  \end{tikzpicture}$$ 
belongs to a family of substitution tiling spaces called the \emph{generalised chair tilings}, which Barge and Sadun
introduced and analysed in \cite{bs11}. Using decorated square tiles as prototiles, the most intricate of them
defines the tiling space $\Omega_{X,+}$ through the substitution rule

\begin{equation}\label{eq:chairsubstirule}
\begin{tikzpicture}[baseline=(current bounding box.center)]%[every path/.style={>=latex}]
  \def\x{0}
  \def\y{0}
  \def\xx{\x+4.16}
  \def\yy{\y-2.5}
  
  \cT{\x}{\y}{w}{y}{x}{z}{\nwarrow}
  \draw [|->] (\x+1.1,\y+0.5) -- +(0.5, 0);
  \cT{\x+1.7}{\y+0.5}{w}{y}{1}{1}{\nwarrow}
  \cT{\x+2.7}{\y+0.5}{0}{y}{x}{0}{\swarrow}
  \cT{\x+2.7}{\y-0.5}{1}{1}{x}{z}{\nwarrow}
  \cT{\x+1.7}{\y-0.5}{w}{0}{0}{z}{\nearrow}
  
  \cT{\x}{\yy}{w}{y}{x}{z}{\swarrow}
  \draw [|->] (\x+1.1,\yy+0.5) -- +(0.5, 0);
  \cT{\x+1.7}{\yy-0.5}{w}{y}{0}{0}{\searrow}
  \cT{\x+2.7}{\yy-0.5}{1}{y}{x}{1}{\swarrow}
  \cT{\x+2.7}{\yy+0.5}{0}{0}{x}{z}{\nwarrow}
  \cT{\x+1.7}{\yy+0.5}{w}{1}{1}{z}{\swarrow}
  
  \cT{\xx}{\y}{w}{y}{x}{z}{\nearrow}
  \draw [|->] (\xx+1.1,\y+0.5) -- +(0.5, 0);
  \cT{\xx+1.7}{\y+0.5}{w}{y}{0}{0}{\searrow}
  \cT{\xx+2.7}{\y+0.5}{1}{y}{x}{1}{\nearrow}
  \cT{\xx+2.7}{\y-0.5}{0}{0}{x}{z}{\nwarrow}
  \cT{\xx+1.7}{\y-0.5}{w}{1}{1}{z}{\nearrow}
  
  \cT{\xx}{\yy}{w}{y}{x}{z}{\searrow}
  \draw [|->] (\xx+1.1,\yy+0.5) -- +(0.5, 0);
  \cT{\xx+1.7}{\yy-0.5}{w}{y}{1}{1}{\searrow}
  \cT{\xx+2.7}{\yy-0.5}{0}{y}{x}{0}{\swarrow}
  \cT{\xx+2.7}{\yy+0.5}{1}{1}{x}{z}{\searrow}
  \cT{\xx+1.7}{\yy+0.5}{w}{0}{0}{z}{\nearrow}
  \end{tikzpicture}
\end{equation} 
where $w,x,y,z \in \{0,1\}$ and with the two labels adjacent to the head of an arrow being the same.   
  
Factors of $\Omega_{X,+}$ can be defined by removing and/or identifying certain decorations on the square tiles to
which the general substitution rule \eqref{eq:chairsubstirule} applies. Tiling spaces $\Omega_{a,b}$, with $a \in
\{X, /, 0\}$ and $b \in \{+, -, 0\}$, are defined through the following:
\begin{center}
\begin{tabular}{|c|c|p{6.5cm}|}
\hline
\multicolumn{2}{|c|}{Index} & Description \\ \hline

$a$ & $X$ & The four arrows on the square tiles remain. \\
    & $/$ & Only the arrows pointing northeast or southwest remain, i.e., arrowheads pointing to other directions 
            are identified. \\
    & $0$ & All arrows are identified/removed. \\ \hline
$b$ & $+$ & All four labels remain. \\
    & $-$ & Only the labels to the left or to the right remain, i.e., the top and bottom labels are identified. \\
    & $0$ & All labels are identified/removed. \\ \hline
\end{tabular}
\end{center}

\smallskip

\noindent In particular, $\Omega_{X,0}$ is (equivalent to) the chair tiling space and $\Omega_{0,0}$ is the
2-dimensional dyadic solenoid $\S_2 \times \S_2$. The scheme above yields nine tiling spaces that are related as
follows:

\begin{equation}\label{eq:9models}\begin{CD}
\Omega_{X,+} @>A>> \Omega_{/,+} @>A>> \Omega_{0,+} \\
                                 @VVBV @VVBV @VVBV \\
\Omega_{X,-} @>A>> \Omega_{/,-} @>A>> \Omega_{0,-} \\
                             @VV{A}V @VV{A}V @VVCV \\
\Omega_{X,0} @>A>> \Omega_{/,0} @>C>> \Omega_{0,0} \\
\end{CD}\end{equation}

\noindent Barge and Sadun beautifully computed the quotient cohomology between adjacent tiling spaces appearing in
\eqref{eq:9models}, using a framework discussed in \cite{bs11}. The quotient cohomologies are given by:
\begin{equation} \label{eq:BSadjacentcohomology}
\begin{aligned}
A &: & H^0_Q &= 0, & H^1_Q & = \Z, & H^2_Q &= \Z[\tfrac{1}{2}], \\
B &: & H^0_Q &= 0, & H^1_Q & = \Z, & H^2_Q &= \Z[\tfrac{1}{2}] \oplus \Z, \\
C &: & H^0_Q &= 0, & H^1_Q & = 0,  & H^2_Q &= \Z[\tfrac{1}{2}] \oplus \Z. 
\end{aligned}
\end{equation}

Tracing a path in \eqref{eq:9models} pertains to a factor map from one tiling space (starting point) to another 
tiling space (ending point). As such, paths having identical starting and ending points pertain to equivalent factor
maps. In this sense, we say that the diagram commutes. We generalise the results in \eqref{eq:BSadjacentcohomology} 
by giving the quotient cohomology between tiling spaces in \eqref{eq:9models}, depending on the factor map between
them as obtained by tracing an arbitrary path. We formalise this as the following theorem, whose calculation is
straightforward.

\begin{thm}\label{thm:generalisedchair}
The quotient cohomologies between adjacent tiling spaces in \eqref{eq:9models} are given in
\eqref{eq:BSadjacentcohomology}; for the remaining pairs of tiling spaces, we have:
\begin{align*}
AA   &: & H^0_Q &= 0, & H^1_Q & = \Z^2,  & H^2_Q &= \Z[\tfrac{1}{2}]^2, \\
AB   &: & H^0_Q &= 0, & H^1_Q & = \Z^2,  & H^2_Q &= \Z[\tfrac{1}{2}]^2 \oplus \Z, \\
AAB  &: & H^0_Q &= 0, & H^1_Q & = \Z^3,  & H^2_Q &= \Z[\tfrac{1}{2}]^3 \oplus \Z, \\
BC   &: & H^0_Q &= 0, & H^1_Q & = \Z,    & H^2_Q &= \Z[\tfrac{1}{2}]^2 \oplus \Z^2, \\
AC   &: & H^0_Q &= 0, & H^1_Q & = 0,     & H^2_Q &= \Z_3 \oplus \Z[\tfrac{1}{2}]^2, \\
AAC  &: & H^0_Q &= 0, & H^1_Q & = \Z,    & H^2_Q &= \Z_3 \oplus \Z[\tfrac{1}{2}]^3, \\
BAC  &: & H^0_Q &= 0, & H^1_Q & = \Z,    & H^2_Q &= \Z_3 \oplus \Z[\tfrac{1}{2}]^3 \oplus \Z, \\
ABAC &: & H^0_Q &= 0, & H^1_Q & = \Z^2,  & H^2_Q &= \Z_3 \oplus \Z[\tfrac{1}{2}]^4 \oplus \Z.
\end{align*} \qed
\end{thm}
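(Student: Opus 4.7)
The plan is to assemble the quotient cohomologies of each composite path from the adjacent data \eqref{eq:BSadjacentcohomology} via a long exact sequence for compositions of factor maps. For any composable factor maps $X \xrightarrow{f} Y \xrightarrow{g} Z$ whose pullbacks are injective on cochains (as holds for every map in \eqref{eq:9models}), the nested inclusions $(gf)^{*}C^{*}(Z) \subset f^{*}C^{*}(Y) \subset C^{*}(X)$ give a short exact sequence of quotient cochain complexes
$$0 \to C^{*}_{Q}(Y,Z) \to C^{*}_{Q}(X,Z) \to C^{*}_{Q}(X,Y) \to 0,$$
and hence a long exact sequence
$$\cdots \to H^{k}_{Q}(Y,Z) \to H^{k}_{Q}(X,Z) \to H^{k}_{Q}(X,Y) \xrightarrow{\delta} H^{k+1}_{Q}(Y,Z) \to \cdots$$
in quotient cohomology. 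Since every tiling space in \eqref{eq:9models} is $2$-dimensional, this collapses to a six-term sequence in degrees $0$, $1$, $2$; moreover, vanishing of $H^{0}_{Q}$ at the adjacent pieces forces $H^{0}_{Q}$ to vanish for their composition, so the $H^{0}_{Q} = 0$ claim propagates to every path by induction from the base cases in \eqref{eq:BSadjacentcohomology}.

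Granted $H^{0}_{Q} = 0$, the remaining content of the sequence is
$$0 \to H^{1}_{Q}(Y,Z) \to H^{1}_{Q}(X,Z) \to H^{1}_{Q}(X,Y) \xrightarrow{\delta} H^{2}_{Q}(Y,Z) \to H^{2}_{Q}(X,Z) \to H^{2}_{Q}(X,Y) \to 0,$$
so the task for each composite path reduces to identifying the connecting map $\delta$ and the extension classes of the two resulting short exact sequences. I would fix a factorisation of each path in \eqref{eq:9models} into adjacent maps and apply this sequence iteratively. For $AA$, $AB$, $AAB$, and $BC$, I expect $\delta = 0$, which can be verified by lifting each $H^{1}_{Q}(X,Y)$-generator to a genuine cocycle in $C^{1}_{Q}(X,Z)$ on the Anderson-Putnam approximants; the resulting short exact sequences then split, since the right-hand quotients are free abelian or of type $\Z[\tfrac{1}{2}]$, yielding the stated direct-sum formulas.

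The main obstacle is the connecting map for paths where an $A$-step is immediately followed by a $C$-step, which is responsible for the $\Z_3$ torsion appearing in $AC$, $AAC$, $BAC$, and $ABAC$. I would compute $\delta$ on the Anderson-Putnam complexes by the snake-lemma recipe: lift the $\Z$-generator of $H^{1}_{Q}(X,Y)$ coming from the $A$-step to a cochain in $C^{1}(X)$, take its coboundary, and read off the class modulo $f^{*}C^{*}(Y)$. I expect the combinatorics of the chair substitution \eqref{eq:chairsubstirule} to force $\delta$ to send this generator to $3$ times a generator of the free $\Z$ summand of $H^{2}_{Q}(Y,Z)$ from the $C$-step, producing a $\Z_3$ cokernel; the subsequent extension by $H^{2}_{Q}(X,Y)$ is then checked to split by direct inspection of cocycles. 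Iterating this through further $A$- or $B$-steps by a repeated application of the six-term sequence yields the remaining lines of the theorem.
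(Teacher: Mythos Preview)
The paper offers no proof of this theorem beyond the remark preceding it that the calculation is ``straightforward'' and a bare \qed, so there is little to compare against at the level of method. Your plan to organise the computation through the six-term exact sequence in quotient cohomology associated with a composition $X \to Y \to Z$ is sound and is a natural way to carry out such a calculation; the vanishing of $H^0_Q$ indeed propagates along composites as you say, and your identification of the connecting map $\delta$ for the $AC$ step as the source of the $\Z_3$ torsion is consistent with the answers.

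One technical point deserves tightening. Your justification that the degree-$2$ short exact sequences split ``since the right-hand quotients are free abelian or of type $\Z[\tfrac{1}{2}]$'' is not sufficient as stated, because $\operatorname{Ext}^1_{\Z}\bigl(\Z[\tfrac{1}{2}],\Z\bigr)$ is nonzero (it is the $\varprojlim{}^{1}$ of the tower $\Z \xleftarrow{\times 2} \Z \xleftarrow{\times 2} \cdots$). Thus for a composition such as $BC$, where a $\Z$ summand appears on the left and a $\Z[\tfrac{1}{2}]$ on the right, splitting is not automatic and must be checked directly on the Anderson--Putnam approximants, just as you already propose to do when verifying $\delta = 0$. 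Alternatively, one can sidestep these extension questions by using the long exact sequence \eqref{eq:longseq} together with the absolute cohomologies recorded in Proposition~\ref{prop:thm6} and Lemma~\ref{lem:quotcohom} for $H^2_Q$, which is closer in spirit to how the paper packages the surrounding results.
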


\noindent Note that the quotient cohomology depends only on the type of path, and not necessarily on particular
tiling spaces. Also, the quotient cohomology groups sum up whenever factor maps are composed. The only exception is
when composing $A$ and $C$ which produces the torsion component $\Z_3$. For the rest of the compositions, the
operation is associative and commutative.

The following propositions already appear in \cite{bs11} as Theorems 6 and 7, although with some errors. The 
absolute and quotient cohomologies have been recalculated and the corrected results appear in the following. The
particular corrections are boxed for easier identification. Proposition \ref{prop:thm7} may also be read off of
Theorem \ref{thm:generalisedchair}.

\begin{prop}[{cf. \cite[Theorem 6]{bs11}}] \label{prop:thm6}
The absolute cohomologies of the nine tiling spaces in \eqref{eq:9models} are given as follows. All spaces have 
$H^0 = \Z$. The first cohomology is given by
$$\begin{CD}
\Z[\frac{1}{2}]^2 \oplus \Z^2 @<A^*<< \boxed{\Z[\tfrac{1}{2}]^2 \oplus \Z} @<A^*<< \Z[\frac{1}{2}]^2 \oplus \Z \\
                                                                                       @AAB^*A @AAB^*A @AAB^*A \\
\Z[\frac{1}{2}]^2 \oplus \Z   @<A^*<< \boxed{\Z[\tfrac{1}{2}]^2}           @<A^*<< \Z[\frac{1}{2}]^2 \\
                                                                         @AA{A^*}A @AA{A^*}A @AAC^*A \\
\Z[\frac{1}{2}]^2             @<A^*<< \Z[\frac{1}{2}]^2                    @<C^*<< \Z[\frac{1}{2}]^2 \\
\end{CD}$$ %
The second cohomology is given by

$$\begin{CD}
\begin{gathered}\tfrac{1}{3}\Z[\tfrac{1}{4}] \oplus \Z[\tfrac{1}{2}]^4 \\[-8pt] \oplus \, \Z \end{gathered} 
@<A^*<< 
\boxed{\begin{gathered}\tfrac{1}{3}\Z[\tfrac{1}{4}] \oplus \Z[\tfrac{1}{2}]^3 \\[-8pt] \oplus \, \Z \end{gathered}} 
@<A^*<< 
\begin{gathered}\Z[\tfrac{1}{4}] \oplus \Z[\tfrac{1}{2}]^2 \\[-8pt] \oplus \, \Z^2 \end{gathered}
\\
@AAB^*A @AAB^*A @AAB^*A 
\\
\frac{1}{3}\Z[\frac{1}{4}] \oplus \Z[\frac{1}{2}]^3 
@<A^*<< 
\boxed{\tfrac{1}{3}\Z[\tfrac{1}{4}]\oplus \Z[\tfrac{1}{2}]^2} 
@<A^*<<
\begin{gathered}\Z[\tfrac{1}{4}] \oplus \Z[\tfrac{1}{2}] \\[-8pt] \oplus \, \Z \end{gathered}
\\
@AA{A^*}A @AA{A^*}A @AAC^*A 
\\
\frac{1}{3}\Z[\frac{1}{4}]\oplus\Z[\frac{1}{2}]^2 
@<A^*<< 
\Z[\frac{1}{4}]\oplus\Z[\frac{1}{2}]\oplus\Z 
@<C^*<<
\Z[\frac{1}{4}] 
\\
\end{CD}$$ \qed
\end{prop}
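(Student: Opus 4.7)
The plan is to compute each of the nine cohomology groups directly by the Anderson--Putnam inverse limit method, and then to corroborate the answer against the long exact sequence \eqref{eq:longseq} for each factor map in \eqref{eq:9models}, using the quotient cohomologies supplied by Theorem \ref{thm:generalisedchair}.

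First, for each $\Omega_{a,b}$ I would fix a set of collared prototiles large enough that the substitution \eqref{eq:chairsubstirule} descends to a cellular endomorphism of the Anderson--Putnam complex $K_{a,b}$. The number of prototiles grows with the amount of decoration retained: $\Omega_{0,0}$ needs none, and its approximant reduces to a single square with all vertices identified and opposite edges glued, whose inverse limit is the $2$-dimensional dyadic solenoid $\S_2\times\S_2$ with $H^{*}=(\Z,\Z[\tfrac{1}{2}]^{2},\Z[\tfrac{1}{4}])$; at the other extreme $\Omega_{X,+}$ carries four arrows and four labels per square together with a first collar to resolve the adjacencies forced by \eqref{eq:chairsubstirule}. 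The remaining seven spaces interpolate by identifying arrows or labels according to the scheme tabulated before \eqref{eq:9models}.

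With each complex in hand I would write down its cellular cochain complex $C^{*}(K_{a,b})$, extract $H^{*}(K_{a,b})$ by linear algebra, and compute the matrix of the substitution on each cohomology group. Passing to the direct limit then produces summands $\Z[\tfrac{1}{2}]$ whenever the induced map is multiplication by $2$ and $\Z[\tfrac{1}{4}]$ whenever it is multiplication by $4$ (since each square splits into four squares of half the side length); the variant $\tfrac{1}{3}\Z[\tfrac{1}{4}]$ arises when a generator sits inside $3\Z$ of the ambient lattice, so that localising at $2$ inverts $4$ but preserves the factor of $3$. Free $\Z$ summands correspond to cocycles left fixed by the substitution, namely those introduced by the arrow and label decorations when one moves away from $\Omega_{0,0}$.

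Finally, I would cross-check each entry against \eqref{eq:longseq} for all factor maps incident to a given node, using Theorem \ref{thm:generalisedchair} for the $H^{*}_{Q}$-terms. The main obstacle will be the extension problem: the long exact sequence determines $H^{*}(X)$ from $H^{*}(Y)$ and $H^{*}_{Q}(X,Y)$ only up to an extension, and over finitely generated abelian groups an extension of $\Z[\tfrac{1}{2}]$ or $\Z[\tfrac{1}{4}]$ by $\Z$ need not split. Resolving these extensions requires the explicit cocycle representatives produced in the first step, and this is also the step at which the factor $\tfrac{1}{3}$ feeding the $\Z_{3}$-torsion of Theorem \ref{thm:generalisedchair} must be tracked consistently across all nine entries --- precisely where the boxed corrections to \cite{bs11} arise.
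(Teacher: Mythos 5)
Your outline identifies the right machinery --- collared Anderson--Putnam complexes, the substitution-induced endomorphism on cochains, direct limits with eigenvalue $2$ giving $\Z[\tfrac{1}{2}]$, eigenvalue $4$ giving $\Z[\tfrac{1}{4}]$, eigenvalue $1$ giving $\Z$, and a consistency check against the long exact sequence \eqref{eq:longseq} --- and this is indeed the standard route to such results. But as it stands the proposal is a plan, not a proof. The content of the proposition is nothing but eighteen explicitly computed groups, four of which (the boxed entries for $\Omega_{/,+}$ and $\Omega_{/,-}$) are corrections to the published values in \cite[Theorem~6]{bs11}; none of these groups is actually derived. You do not exhibit the cell complexes, the substitution matrices on $H^1$ and $H^2$, their eigenstructure, or the index-$3$ embedding of the pullback of the solenoid generator that produces $\tfrac{1}{3}\Z[\tfrac{1}{4}]$. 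Without at least the middle-column computations there is no way to see why, e.g., $H^1(\Omega_{/,-})=\Z[\tfrac{1}{2}]^2$ rather than $\Z[\tfrac{1}{2}]^2\oplus\Z$, which is precisely the point at issue. A proof of a computational statement must contain the computation.

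Two smaller points. First, your fallback of ``corroborating'' the entries via \eqref{eq:longseq} and Theorem~\ref{thm:generalisedchair} cannot close the gap: as you yourself note, the long exact sequence determines $H^*(X)$ only up to an extension and up to knowledge of the connecting maps, and moreover the composed quotient cohomologies in Theorem~\ref{thm:generalisedchair} (in particular the $\Z_3$ in the $AC$ entries) are themselves extracted from the same absolute computations, so the check risks circularity. Second, your description of $\tfrac{1}{3}\Z[\tfrac{1}{4}]$ as arising ``when a generator sits inside $3\Z$'' has the containment backwards: the notation records that the image of $H^2(\Omega_{0,0})=\Z[\tfrac{1}{4}]$ under the pullback is the index-$3$ subgroup $\Z[\tfrac{1}{4}]\subset\tfrac{1}{3}\Z[\tfrac{1}{4}]$ (abstractly the two groups are isomorphic; the fraction only tracks the embedding, which is what later yields the $\Z_3$ torsion in the quotient). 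Making that embedding explicit is exactly the part of the computation that must be written down.
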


\begin{prop}[{cf. \cite[Theorem 7]{bs11}}] \label{prop:thm7}
The quotient cohomologies of the nine tiling spaces in \eqref{eq:9models}, relative to the solenoid $\Omega_{0,0}$,
are given as follows. For all spaces, $H^0_Q=0$. The first quotient cohomology is given by
$$\begin{CD}
\Z^2 @<A^*<< \boxed{\Z} @<A^*<< \Z \\
@AAB^*A @AAB^*A @AAB^*A \\
\Z @<A^*<< \boxed{0} @<A^*<< 0 \\
@AA{A^*}A @AA{A^*}A @AAC^*A \\
0 @<A^*<< 0 @<C^*<< 0 \\
\end{CD}$$ %
The second quotient cohomology is given by

$$\begin{CD}
\Z_3 \oplus \Z[\frac{1}{2}]^4 \oplus \Z @<A^*<< \boxed{\Z_3 \oplus
\Z[\tfrac{1}{2}]^3 \oplus\Z} @<A^*<< \Z[\frac{1}{2}]^2 \oplus \Z^2 \\
@AAB^*A @AAB^*A @AAB^*A \\
\Z_3 \oplus \Z[\frac{1}{2}]^3 @<A^*<< \boxed{\Z_3 \oplus \Z[\tfrac{1}{2}]^2} @<A^*<<
 \Z[\frac{1}{2}] \oplus \Z \\
@AA{A^*}A @AA{A^*}A @AAC^*A \\
\Z_3\oplus\Z[\frac{1}{2}]^2 @<A^*<< \Z[\frac{1}{2}]\oplus\Z @<C^*<< 0 \\
\end{CD}$$ \qed
\end{prop}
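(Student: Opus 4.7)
The plan is to deduce Proposition \ref{prop:thm7} from the tools already assembled; two compatible routes are available. One can either read off each entry from Theorem \ref{thm:generalisedchair} by identifying the type of composition connecting the source space to $\Omega_{0,0}$, or one can compute each quotient cohomology independently from Proposition \ref{prop:thm6} via the long exact sequence \eqref{eq:longseq}.

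For the quickest route, each of the nine tiling spaces is connected to $\Omega_{0,0}$ by a path in \eqref{eq:9models}; since the diagram commutes, any such path induces the same pullback, and the type of composition (as a multiset of $A$'s, $B$'s and $C$'s) is uniquely determined by the source. For example, every path from $\Omega_{X,+}$ to $\Omega_{0,0}$ consists of two $A$'s, one $B$, and one $C$, so Theorem \ref{thm:generalisedchair} gives $H^1_Q = \Z^2$ and $H^2_Q = \Z_3 \oplus \Z[\tfrac{1}{2}]^4 \oplus \Z$; the other eight entries are then filled in mechanically. As an independent check, one uses that $\Omega_{0,0} = \S_2 \times \S_2$ has $H^0 = \Z$, $H^1 = \Z[\tfrac{1}{2}]^2$, $H^2 = \Z[\tfrac{1}{4}]$, and $H^n = 0$ for $n \geq 3$. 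Lemma \ref{lem:quotcohom} then immediately gives $H^2_Q(X, \Omega_{0,0}) = \mathrm{coker}(f^*_2)$, where $f^*_2: \Z[\tfrac{1}{4}] \to H^2(X)$ is read off from Proposition \ref{prop:thm6}. The $\Z_3$ torsion appears precisely where the target contains a $\tfrac{1}{3}\Z[\tfrac{1}{4}]$ summand into which $f^*_2$ embeds the subgroup $\Z[\tfrac{1}{4}]$, yielding $\tfrac{1}{3}\Z[\tfrac{1}{4}]/\Z[\tfrac{1}{4}] \cong \Z_3$.

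For $H^1_Q$, the long exact sequence yields
\begin{equation*}
0 \to \mathrm{coker}(f^*_1) \to H^1_Q \to \ker(f^*_2) \to 0,
\end{equation*}
together with $H^0_Q = 0$ since $f^*_1: \Z[\tfrac{1}{2}]^2 \to H^1(X)$ is injective by inspection of Proposition \ref{prop:thm6}. In each case $\ker(f^*_2)$ is free and $\mathrm{coker}(f^*_1)$ is the complementary free summand in $H^1(X)$, so the sequence splits and $H^1_Q$ is read off directly. The main obstacle is purely combinatorial bookkeeping: one must track which generator of $\Z[\tfrac{1}{4}]$ lands in which summand of each $H^2(X)$, and verify that the cokernels decompose in the stated direct-sum form. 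Once the pullbacks of Proposition \ref{prop:thm6} are recorded correctly, the remaining work reduces to matrix computations over $\Z[\tfrac{1}{2}]$ and $\Z[\tfrac{1}{4}]$, and every nontrivial torsion or extra free $\Z$ contribution originates from the $C$-type maps acting on the $\tfrac{1}{3}\Z[\tfrac{1}{4}]$ and $\Z$ summands present in the second cohomology.
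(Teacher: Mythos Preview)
Your proposal is correct and your primary route is exactly the paper's: the paper simply remarks that Proposition~\ref{prop:thm7} ``may also be read off of Theorem~\ref{thm:generalisedchair}'' and marks it with \qed, which is precisely your identification of each space with the path type ($C$, $AC$, $AAC$, $BC$, $BAC$, $ABAC$) connecting it to $\Omega_{0,0}$. Your second route via Lemma~\ref{lem:quotcohom} and the absolute cohomologies of Proposition~\ref{prop:thm6} is a welcome independent check that the paper does not spell out; note that in fact $\ker(f^*_2)=0$ in every case (the $\Z[\tfrac14]$ summand always embeds), so $H^1_Q=\mathrm{coker}(f^*_1)$ directly.
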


\section{Discussion and conclusion} %%%%%%%%%%%%%%%%%%%%%%%%%%%%%%%%%%%%%%%%%%%%%%%%%%%%%%%%%%%%%%%%%%%%%%%%%%%%%%%%

Determining the quotient cohomologies between 1-dimensional substitution tiling spaces is rather straightforward
because of Lemma \ref{lem:quotcohom}. In particular, it suffices to know $f^*_1$ to be able to compute both $H^0_Q$
and $H^1_Q$.

In higher dimensions, a first challenge is in the enumeration of inequivalent factors and the factor maps between them
before one can study their quotient cohomologies. In the case of the generalised chair tilings, there are two
more substitution tiling spaces between $\Omega_{X,-}$ and $\Omega_{0,0}$, which are inequivalent to any of those
already enumerated, but are impossible to obtain through the identification rules considered earlier.

A similar analysis as above has also been carried out for the Squiral \cite{bgg13} and Chacon \cite{fra08}
substitution tiling spaces, which are both 2-dimensional.

\section*{Acknowledgement}

This work is supported by the German Research Foundation (DFG) via the Collaborative Research Centre (SFB 701) at
Bielefeld University.


\begin{thebibliography}{Z}

%%%%% References should  be numbered consecutively in Arabic numerals in the order in 
%%%%% which they are mentioned in the text. DOI numbers should be given, when applicable

%%%% For a paper in journal:
%%%% A. Engelko, H. Bluhm, \textit{J. Appl. Phys.} \textbf{95}, 5828 (2004). DOI............

%%%% For an article in a book:
%%%% R. Hirota, in: \textit{Solitons}, DOI............ Eds. R.K. Bulough, P.J. Caudrey, Springer, 
%%%% Berlin 1980, p. 157.

%%%% For a book:
%%%% P.G. de Gennes, J. Prost, \textit{The Physics of Liquid Crystals}, DOI............
%%%% Oxford Sci. Pub., Oxford 1993.

%%%% For an unpublished paper:
%%%% L. Gondek, Ph.D. Thesis DOI............, Jagiellonian University, Krakow 2004.


%%%% http://www.efm.leeds.ac.uk/~mark/ISIabbr/C_abrvjt.html

\bibitem{bs11}
M.~Barge, L.~Sadun,
%{Quotient cohomology for tiling spaces},
\textit{New York J. Math.} \textbf{17}, 579 (2011). % pp 579--599.
\url{http://nyjm.albany.edu/j/2011/17-25.html}

\bibitem{ap98}
J.~Anderson, I.~Putnam,
%{Topological invariants for substitution tilings and their associated $C^*$-algebras},
\textit{Ergod. Theor. Dyn. Syst.} \textbf{18}, 509 (1998).
\doi{10.1017/S0143385798100457}
%\texttt{arXiv:math-ph/9809010}.

\bibitem{sa08}
L.~Sadun,
\textit{Topology of Tiling Spaces}, 
\url{http://www.ams.org/publications/authors/books/postpub/ulect-46}
%{University Lecture Series 46, 
American Mathematical Society, Providence, RI 2008.

\bibitem{bgg12}
M.~Baake, F.~G\"ahler, U.~Grimm,
%{Spectral and topological properties of a family of generalised Thue-Morse sequences},
\textit{J. Math. Phys.} \textbf{53}, 032701 (2012).
\doi{10.1063/1.3688337}
%\texttt{arXiv:1201.1423}.

\bibitem{rob99}
E.A.~Robinson Jr.,
%{On the table and on the chair},
\textit{Indag. Mathem.} \textbf{10}, 581 (1999). % pp-- 599
\doi{10.1016/S0019-3577(00)87911-2}

%\bibitem{ba02}
%M. Baake,
%in: \textit{Quasicrystals}, \doi{10.1007/978-3-662-05028-6\_2} 
%Eds. J.-B. Suck, M. Schreiber, P. H\"aussler, Springer, Berlin 2002, p. 17.
%%\textit{A guide to mathematical quasicrystals, 
%%in: Quasicrystals -- An introduction to Structure, Physical Properties and Applications, eds J-B Suck, M Schreiber and
%%P H\"aussler, Springer, Berlin}, pp. 17--48.
%%\texttt{arXiv:math-ph/9901014}

\bibitem{bgg13}
M.~Baake, F.~G\"ahler, U.~Grimm
%%{Examples of substitution systems and their factors},
\textit{J. I. S.} \textbf{16}, Article 13.2.14 (2013). %, pp. XXX--XXX.
\url{https://cs.uwaterloo.ca/journals/JIS/VOL16/Baake/baake3.html}
%\texttt{arXiv:1211.5466v2}.

%\bibitem{bg13}
%M. Baake, U. Grimm,
%%{Squirals and beyond: Substitution tilings with singular continuous spectrum},
%\textit{Ergod. Theor. Dyn. Syst.} \textbf{FirstView}, 1 (2013). %% pp 1--26
%\doi{10.1017/etds.2012.191}

%\bibitem{bsj91}
%M. Baake, M. Scholottmann, P.D. Jarvis,
%%{Quasicperiodic patterns with tenfold symmetry and equivalence with respect to local derivability},
%\textit{J. Phys. A: Math. Gen.} \textbf{24}, 4637 (1991). %% pp 4637--4654.
%\doi{10.1088/0305-4470/24/19/025}


%\bibitem{bd08}
%M. Barge, B. Diamond,
%%{Cohomology in one-dimensional substitution tiling spaces},
%\textit{Proc. Amer. Math. Soc.} \textbf{136}, 2183 (2008). %% pp 2183--2191 
%\doi{10.1090/S0002-9939-08-09225-3}

%\bibitem{bdhs09}
%M. Barge, B. Diamond, J. Hunton, L. Sadun,
%%\textit{Cohomology of substitution tiling spaces}
%{\it Ergod. Theor. Dyn. Syst.} \textbf{30}, 1607 (2010). %% pp 1607--1627
%\doi{10.1017/S0143385709000777}

%\bibitem{bbg06}
%J. Bellissard, R. Benedetti, J.--M. Gambaudo,
%%{Spaces of tilings, finite telescopic approximations and gap-labelling},
%\textit{Commun. Math. Phys.} \textbf{261}, 1 (2006). %% pp 1--41
%\doi{10.1007/s00220-005-1445-z}

%\bibitem{bg03}
%R. Benedetti, J.--M. Gambaudo,
%%{On the dynamics of $G$-solenoids: applications to Delone sets},
%\textit{Ergod. Theor. Dyn. Syst.} \textbf{23}, 673 (2003). %% pp 673--691 
%\doi{10.1017/S0143385702001578}

\bibitem{fra08}
N.P.~Frank,
\textit{Exposition. Math.} \textbf{26}, 295 (2008). %% pp 295--326
\doi{10.1016/j.exmath.2008.02.001}

%\bibitem{fs09}
%N.P. Frank, L. Sadun,
%%{Topology of some tiling spaces without finite local complexity},
%\textit{Discret. Contin. Dyn. S.} \textbf{23}, 847 (2009). %% pp 847--865
%\doi{10.3934/dcds.2009.23.847}

%\bibitem{gah}
%F. G\"ahler,
%%{Topology of some tiling spaces without finite local complexity},
%%\textit{Disc. \& Cont. Dyn. Sys.} \textbf{23}, 847%--865
%%(2009). DOI:
%Talk given at the conference ``Aperiodic Order, Dynmaical Systems, Operator and Topology'' (2002).

%\bibitem{gs87}
%Gr\"unbaum S. and Shepjard G. (1987). \textit{Tilings and Patterns}, Freeman, New York.

%\bibitem{ors02}
%N. Ormes, C. Radin, L. Sadun,
%%{A homeomorphism invariant for substitution tiling spaces},
%\textit{Geometriae Dedicata} \textbf{90}, 153 (2002). %% pp 153--182 
%\doi{10.1023/A:1014942402919}

%\bibitem{sa03}
%L. Sadun,
%%\textit{Tiling spaces are inverse limits},
%\textit{J. Math. Phys.} \textbf{44}, 5410 (2003). %% pp 5410--5415
%\doi{10.1063/1.1613041}

\end{thebibliography}
\end{document}